\newtheorem{prop}{Proposition}
\newtheorem{theorem}[prop]{Theorem}
\newtheorem{corollary}[prop]{Corollary}
\theoremstyle{definition}
\newtheorem{remark}[prop]{Remark}
\newtheorem{example}[prop]{Example}
\newtheorem{question}[prop]{Question}
\newtheorem{observation}[prop]{Observation}
\newtheorem{construction}[prop]{Construction}
\newcommand{\seqnum}[1]{\href{https://oeis.org/#1}{\rm \underline{#1}}}
\newcommand{\mylabel}[2]{#2\def\@currentlabel{#2}\label{#1}}
\begin{document}
\tikzset{mystyle/.style={matrix of nodes,
        nodes in empty cells,
        row 1/.style={nodes={draw=none}},
        row sep=-\pgflinewidth,
        column sep=-\pgflinewidth,
        nodes={draw,minimum width=1cm,minimum height=1cm,anchor=center}}}
\tikzset{mystyleb/.style={matrix of nodes,
        nodes in empty cells,
        row sep=-\pgflinewidth,
        column sep=-\pgflinewidth,
        nodes={draw,minimum width=1cm,minimum height=1cm,anchor=center}}}

\title{On the maximal sum of the entries of a matrix power}

\author[SELA FRIED]{Sela Fried$^{\dagger}$}
\thanks{$^{\dagger}$ Department of Computer Science, Israel Academic College,
52275 Ramat Gan, Israel.
\\
\href{mailto:friedsela@gmail.com}{\tt friedsela@gmail.com}}
\author[TOUFIK MANSOUR]{Toufik Mansour$^{\sharp}$}
\thanks{$^{\sharp}$ Department of Mathematics, University of Haifa, 3103301 Haifa,
Israel.\\
\href{mailto:tmansour@univ.haifa.ac.il}{\tt tmansour@univ.haifa.ac.il}}

\maketitle

\begin{abstract}
Let $p_n$ be the maximal sum of the entries of $A^2$, where $A$ is a square matrix of size $n$, consisting of the numbers $1,2,\ldots,n^2$, each appearing exactly once. We prove that $m_n=\Theta(n^7)$. More precisely, we show that $n(240n^{6}+28n^{5}+364n^{4}+210n^{2}-28n+26-105((-1)^{n}+1))/840\leq p_n\leq n^{3}(n^{2}+1)(7n^{2}+5)/24$.
\bigskip

\noindent \textbf{Keywords:} matrix power, maximal entries sum.

\smallskip

\noindent
\textbf{Math.~Subj.~Class.:} 05B20, 15A15, 15A45.
\end{abstract}

\section{Introduction}
This work is concerned with the following question.
\begin{question}\label{q;1}
Let $A$ be a square matrix of size $n$, consisting of the numbers $1,2,\ldots,n^2$, each appearing exactly once. Let $m\geq 2$ be a natural number. What is the maximal sum of the entries of $A^m$?
\end{question}
This question was inspired by the work of Gasper et al.\ \cite{Sigg1}, who were interested in the maximal determinant of $A$ (see also \seqnum{A085000} in the On-Line Encyclopedia of Integer Sequences \cite{OL}). Following their approach, we relax the problem by considering arbitrary real square matrices with a prescribed sum and sum of squares of their entries. The method of Lagrange multipliers allows us then to obtain an upper bound on the maximal sum of entries, for $m=2$. The lower bound is obtained by constructing special matrices that, based on empirical evidence, have favorable properties in this regard.

\section{Main results}
Let $n$ be a natural number to be used throughout this work. All matrices are tacitly assumed to be square of size $n$. We shall make use of the following notation. For a matrix $A$ and $1\leq i,j\leq n$, we denote by $(A)_{ij}$ the $ij$th entry of $A$. We denote by $s(A)$ the sum of the entries of $A$ and by $q(A)$ the sum of their squares, i.e., $s(A)=\sum_{i,j=1}^n (A)_{ij}$ and $q(A)=\sum_{i,j=1}^n (A)^2_{ij}$. The transpose of $A$ is denoted by $A^T$. We denote by $p_n$ the answer to Question \ref{q;1}, for $m=2$.

\subsection{The upper bound}

The results we obtain in this part are based on the following observation.

\begin{observation}
Let $m\geq 2$ be a natural number and let $X$ be a matrix whose entries are variables. We define the Lagrange function by $$L(X,\lambda,\mu)=s(X^m)-\lambda(s(X)-s(A))-\mu(q(X)-q(A)).$$ It is well known (e.g. \cite[(90)]{Pet}) that, for every $1\leq k,\ell,s,t\leq n$, we have $$\frac{\partial(X^{m})_{k\ell}}{\partial (X)_{st}}=\sum_{r=0}^{m-1}\left(X^{r}J^{st}X^{m-1-r}\right)_{k\ell},$$ where $J^{st}$ is the matrix having $1$ at the $st$th entry and $0$ elsewhere. By the Lagrange multipliers theorem, there exist real $\lambda,\mu$ such that, for every $1\leq s,t\leq n$, we have $\partial L(X,\lambda,\mu)/\partial (X)_{st}=0$, i.e.,
\begin{align}
&\sum_{k,\ell=1}^{n}\sum_{r=0}^{m-1}\left(X^{r}J^{st}X^{m-1-r}\right)_{k\ell}-\lambda-2\mu (X)_{st}=0\iff\nonumber\\&\sum_{r=0}^{m-1}\left(\sum_{\ell=1}^{n}\left(X^{m-1-r}\right)_{t\ell}\right)\left(\sum_{k=1}^{n}(X^{r})_{ks}\right)-\lambda-2\mu (X)_{st}=0.\label{eq;a1}\end{align}
Furthermore, $s(X)=s(A)$ and $q(X)=q(A)$.
Equation (\ref{eq;a1}), taken over $1\leq k,\ell\leq n$, may be written compactly in matrix form as
\begin{equation}\label{eq;a2}\sum_{r=0}^{m-1}\left(J(X^{T})^{m-1-r}\right)\circ\left((X^{T})^{r}J\right)=\lambda J+2\mu X,\end{equation} where $J$ is the matrix all of whose entries are $1$ and $\circ$ stands for the Hadamard product. Equation (\ref{eq;a2}) seems to be hard to analyze, for arbitrary $m$. Nevertheless, for $m=2$, we obtain the following result.
\end{observation}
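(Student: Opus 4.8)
The plan is to specialize the stationarity condition (\ref{eq;a2}) to $m=2$ and read the structure of an optimal relaxed matrix off the resulting identity. For $m=2$ the sum over $r$ has only the terms $r=0,1$, and since $(X^{T})^{0}=I$ collapses one Hadamard factor to $J$, equation (\ref{eq;a2}) becomes $JX^{T}+X^{T}J=\lambda J+2\mu X$. Writing $R_k=\sum_{j}(X)_{kj}$ for the $k$th row sum and $C_k=\sum_{i}(X)_{ik}$ for the $k$th column sum, the $(i,j)$ entry of this identity reads $R_j+C_i=\lambda+2\mu (X)_{ij}$. I would first record that the relaxed feasible set $\{X\in\R^{n\times n}:s(X)=s(A),\ q(X)=q(A)\}$ is the intersection of a hyperplane with a sphere, hence compact, and that $s(X^2)=\sum_{k=1}^{n}R_kC_k$ is continuous; so a maximizer exists and the Lagrange conditions hold there.

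Next I would solve the scalar relation $R_j+C_i=\lambda+2\mu(X)_{ij}$. Summing it over $i$, and separately over $j$, and using $\sum_i C_i=\sum_j R_j=s(A)$, produces for each index $k$ a two-by-two linear coupling between $R_k$ and $C_k$ whose determinant vanishes exactly when $2\mu=\pm n$. Away from these values the coupling forces every $R_k$ and every $C_k$ to equal $s(A)/n$, which yields only the small value $s(X^2)=s(A)^2/n$. The maximizing case is the degenerate branch $2\mu=n$, which forces $\lambda=s(A)/n$ and $R_k=C_k$ for each $k$ while leaving the common value $d_k:=R_k=C_k$ free subject only to $\sum_k d_k=s(A)$. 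On this branch the relation gives the explicit form $(X)_{ij}=\bigl(d_i+d_j-s(A)/n\bigr)/n$.

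Finally I would substitute this parametrization into the two constraints. Setting $e_k=d_k-s(A)/n$, so that $\sum_k e_k=0$, a short expansion turns $q(X)=q(A)$ into a single linear equation for $E:=\sum_k e_k^2$, namely $2E/n+s(A)^2/n^2=q(A)$. Since $R_k=C_k=d_k$ here, $s(X^2)=\sum_k d_k^2=E+s(A)^2/n$, and eliminating $E$ yields the closed form $s(X^2)=\tfrac12 n\,q(A)+\tfrac{1}{2n}s(A)^2$. Inserting $s(A)=n^2(n^2+1)/2$ and $q(A)=n^2(n^2+1)(2n^2+1)/6$ and simplifying over the common denominator $24$ produces $n^{3}(n^{2}+1)(7n^{2}+5)/24$. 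Because every admissible integer matrix $A$ has exactly these values of $s(A)$ and $q(A)$ and therefore lies in the relaxed feasible set, this number is an upper bound for $p_n$.

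The main obstacle is the case analysis around the stationarity relation: one must separate the generic branch from the degenerate branches $2\mu=\pm n$, and then argue that it is precisely the branch $2\mu=n$ — the one on which the row and column sums are permitted to vary across indices — that attains the global maximum on the compact constraint set rather than a saddle or a minimum. Here the comparison is clean, since the branch value exceeds the generic value $s(A)^2/n$ exactly because $q(A)>s(A)^2/n^2$ by the Cauchy--Schwarz inequality for the distinct entries $1,2,\ldots,n^2$. The remaining work — expressing both $q(X)$ and $s(X^2)$ through the single quantity $E$ and simplifying the resulting polynomial in $n$ — is routine once the optimal structure $(X)_{ij}=\bigl(d_i+d_j-s(A)/n\bigr)/n$ is in hand.
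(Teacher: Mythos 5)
Your proposal takes the observation's stationarity equations as given and effectively proves the upper-bound theorem and corollary that the observation leads into; comparing it against the paper's proof of that theorem, it is correct in substance but takes a genuinely different route at the decisive step. The two coincide through the specialization $m=2$: your entrywise relation $R_j+C_i=\lambda+2\mu(X)_{ij}$ is exactly the paper's (\ref{eq;1}). From there the paper works with the multiplier itself: summing (\ref{eq;1}) over $s,t$ gives (\ref{eq;10}), multiplying by $(X)_{st}$ and summing gives the identity (\ref{eq;3}), $s(X^2)=s(A)^2/n+\mu\left(q(A)-s(A)^2/n^2\right)$, and everything then hinges on proving $|\mu|\le n/2$, which the paper does by a matrix argument: the antisymmetric part $Y=X^T-X$ satisfies the Sylvester equation $(J+2\mu I)Y+YJ=0$, so for $\mu\neq 0,\pm n/2$ the matrix $X$ is symmetric, and inverting $2\mu I-J$ forces $X=(s(A)/n^2)J$, contradicting $q(X)=q(A)\neq s(A)^2/n^2$. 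You instead stay entirely at the level of row and column sums: summing your relation over $i$ and over $j$ yields, for each $k$, the linear system $nR_k-2\mu C_k=n\lambda-s(A)$, $-2\mu R_k+nC_k=n\lambda-s(A)$ with determinant $n^2-4\mu^2$, and you split on whether this vanishes. This is more elementary (no Sylvester equation, no matrix inversion), it reuses $s(X^2)=\sum_kR_kC_k$ (the paper's (\ref{eq;123}), which the paper exploits only in the lower-bound section), and it buys something the paper's inequality does not: an explicit parametrization $(X)_{ij}=(d_i+d_j-s(A)/n)/n$ of the critical points on the branch $2\mu=n$, showing that the relaxed maximum is actually attained and equals $\tfrac{n}{2}q(A)+\tfrac{1}{2n}s(A)^2$. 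Your constraint computation ($2E/n+s(A)^2/n^2=q(A)$) and the closing arithmetic with $s(A)=n^2(n^2+1)/2$ and $q(A)=n^2(n^2+1)(2n^2+1)/6$ check out.

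There is one genuine, though easily repaired, gap: the branch $2\mu=-n$ is never dispatched. Your stated comparison (``the branch value exceeds the generic value $s(A)^2/n$'') covers only $2\mu=n$ against the nondegenerate case, so a priori a critical point with $2\mu=-n$ could carry a larger value than either. The fix is one line inside your own framework: when $2\mu=-n$ the two equations of your system coincide and read $R_k+C_k=\lambda-s(A)/n$ for every $k$; summing over $k$ gives $\lambda=3s(A)/n$, hence $R_k+C_k=2s(A)/n$, and then
$$s(X^2)=\sum_{k=1}^nR_kC_k\le\sum_{k=1}^n\left(\frac{R_k+C_k}{2}\right)^2=\frac{s(A)^2}{n},$$
so this branch never exceeds the generic value. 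Once all three branches are bounded, note that you do not actually need to decide on which branch the maximizer lies --- an upper bound on every critical value suffices --- which dissolves your ``saddle versus maximum'' concern. Finally, like the paper, you invoke the Lagrange multiplier theorem without verifying a constraint qualification; if you want this airtight, observe that the constraint gradients $J$ and $2X$ are parallel only when $X$ is a scalar multiple of $J$, which is infeasible because $q(A)>s(A)^2/n^2$.
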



\begin{theorem}
Let $A$ be a matrix. Then $$s(A^2) \leq \frac{s(A)^{2}}{n}+\frac{n}{2}\left|q(A)-\frac{s(A)^{2}}{n^{2}}\right|.$$
\end{theorem}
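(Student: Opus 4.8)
The plan is to recast $s(A^{2})$ as a dot product of the row-sum and column-sum vectors. Writing $\mathbf{1}$ for the all-ones column vector and expanding $(A^{2})_{ij}=\sum_{k}(A)_{ik}(A)_{kj}$, one gets $s(A^{2})=\sum_{k}\left(\sum_{i}(A)_{ik}\right)\left(\sum_{j}(A)_{kj}\right)=\mathbf{c}\cdot\mathbf{r}$, where $\mathbf{r}=A\mathbf{1}$ collects the row sums and $\mathbf{c}=A^{T}\mathbf{1}$ the column sums. Since $\left(\sum_{i,j}(A)_{ij}\right)^{2}\leq n^{2}\sum_{i,j}(A)_{ij}^{2}$ by Cauchy--Schwarz, we have $q(A)\geq s(A)^{2}/n^{2}$, so the absolute value on the right-hand side is superfluous and the claim is equivalent to $\mathbf{c}\cdot\mathbf{r}\leq s(A)^{2}/(2n)+n\,q(A)/2$.

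Next I would split off the $\mathbf{1}$-direction. Both $\mathbf{r}$ and $\mathbf{c}$ have the same projection onto $\mathbf{1}$, namely $\tfrac{s(A)}{n}\mathbf{1}$, because $\mathbf{1}^{T}\mathbf{r}=\mathbf{1}^{T}\mathbf{c}=s(A)$. Writing $\mathbf{r}=\tfrac{s(A)}{n}\mathbf{1}+\mathbf{r}'$ and $\mathbf{c}=\tfrac{s(A)}{n}\mathbf{1}+\mathbf{c}'$ with $\mathbf{r}',\mathbf{c}'\perp\mathbf{1}$, orthogonality gives $\mathbf{c}\cdot\mathbf{r}=\tfrac{s(A)^{2}}{n}+\mathbf{c}'\cdot\mathbf{r}'$. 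By Cauchy--Schwarz followed by the AM--GM inequality, $\mathbf{c}'\cdot\mathbf{r}'\leq\tfrac{1}{2}\left(\|\mathbf{r}'\|^{2}+\|\mathbf{c}'\|^{2}\right)$, so everything reduces to the single estimate $\|\mathbf{r}'\|^{2}+\|\mathbf{c}'\|^{2}\leq n\,q(A)-\tfrac{s(A)^{2}}{n}$.

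The heart of the argument is this last inequality, and here the naive route of bounding $\|\mathbf{r}'\|^{2}$ and $\|\mathbf{c}'\|^{2}$ separately (each by $n\,q(A)$) is too lossy by roughly a factor of two, since it ignores that $\mathbf{r}$ and $\mathbf{c}$ are built from the \emph{same} entries. The fix I would use is the two-way, analysis-of-variance-type orthogonal decomposition of the entries: with $\bar{a}=s(A)/n^{2}$, $\bar{a}_{i\cdot}=(A\mathbf{1})_{i}/n$ and $\bar{a}_{\cdot j}=(A^{T}\mathbf{1})_{j}/n$, write
\[(A)_{ij}=\bar{a}+(\bar{a}_{i\cdot}-\bar{a})+(\bar{a}_{\cdot j}-\bar{a})+\big((A)_{ij}-\bar{a}_{i\cdot}-\bar{a}_{\cdot j}+\bar{a}\big).\]
The four summands are pairwise orthogonal in the entrywise inner product, so summing their squares yields
\[q(A)=\frac{s(A)^{2}}{n^{2}}+\frac{1}{n}\|\mathbf{r}'\|^{2}+\frac{1}{n}\|\mathbf{c}'\|^{2}+R,\]
where $R\geq 0$ is the squared norm of the residual term. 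Dropping $R\geq 0$ gives exactly $\|\mathbf{r}'\|^{2}+\|\mathbf{c}'\|^{2}\leq n\,q(A)-s(A)^{2}/n$, and substituting back produces $\mathbf{c}\cdot\mathbf{r}\leq \tfrac{s(A)^{2}}{n}+\tfrac{n}{2}\big(q(A)-\tfrac{s(A)^{2}}{n^{2}}\big)$, which is the claimed bound.

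The only genuinely delicate step is obtaining the \emph{combined} bound on $\|\mathbf{r}'\|^{2}+\|\mathbf{c}'\|^{2}$; the orthogonal decomposition is precisely what supplies the nonnegative slack $R$ that closes the factor-of-two gap left by Cauchy--Schwarz. As a sanity check, equality throughout forces $R=0$ and $\|\mathbf{r}'\|=\|\mathbf{c}'\|$, i.e.\ a matrix with vanishing residual, which means $(A)_{ij}=u_{i}+v_{j}$ for suitable vectors $u,v$; this is exactly the additively structured form $(A)_{ij}=(r_{j}+c_{i}-\lambda)/(2\mu)$ that the Lagrange analysis of the Observation predicts for the critical points when $m=2$, which is reassuring.
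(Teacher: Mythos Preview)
Your proof is correct and takes a genuinely different route from the paper's. The paper argues via Lagrange multipliers: at a maximizer $X$ on the constraint set $\{s(X)=s(A),\,q(X)=q(A)\}$ one has $JX^{T}+X^{T}J=\lambda J+2\mu X$, from which it derives $s(X^{2})=s(A)^{2}/n+\mu\big(q(A)-s(A)^{2}/n^{2}\big)$ and then bounds $|\mu|\le n/2$ by a spectral argument on the Sylvester equation $(J+2\mu I)Y+YJ=0$ with $Y=X^{T}-X$. Your argument is entirely elementary: the identity $s(A^{2})=\mathbf{c}\cdot\mathbf{r}$, the split along $\mathbf{1}$, Cauchy--Schwarz/AM--GM, and the two-way ANOVA decomposition to control $\|\mathbf{r}'\|^{2}+\|\mathbf{c}'\|^{2}$ jointly. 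Your approach avoids optimization theory and the Sylvester-equation machinery, and it makes the equality case transparent (rank-one-plus-constant matrices $(A)_{ij}=u_{i}+v_{j}$ with $\mathbf{r}'=\mathbf{c}'$). The paper's route, in return, is embedded in a framework set up for general $m$ (equation~(\ref{eq;a2})), explicitly locates the critical points, and explains structurally why the bound has the shape $s(A)^{2}/n+\mu\cdot(\text{variance term})$ with $\mu$ pinned to $\pm n/2$.
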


\begin{proof}
For $m=2$ and $1\leq s,t\leq n$, equations (\ref{eq;a1}) and (\ref{eq;a2}) take the forms \begin{equation}\label{eq;1}\sum_{\ell=1}^{n}(X)_{t\ell}+\sum_{k=1}^{n}(X)_{ks}-\lambda-2\mu (X)_{st}=0\end{equation} and \begin{equation}\label{eq;104}
 JX^{T}+X^{T}J=\lambda J+2\mu X,
\end{equation} respectively. Summing (\ref{eq;1}) over $1\leq s,t\leq n$, we obtain \begin{equation}\label{eq;10}
    \lambda=\frac{2s(A)(n-\mu)}{n^{2}}.
\end{equation} Multiplying (\ref{eq;1}) by $(X)_{st}$, summing over $1\leq s,t\leq n$ and using (\ref{eq;10}), we obtain
\begin{equation}\label{eq;3}s(X^{2})=\frac{s(A)^{2}}{n}+\mu\left(q(A)-\frac{s(A)^{2}}{n^{2}}\right).\end{equation}
If $q(A)-s(A)^2/n^2=0$, we are done. Assuming otherwise, it suffices to bound $\mu$. To this end, let $Y = X^T-X$. Transposing (\ref{eq;104}) and subtracting the result from (\ref{eq;104}), we obtain the homogeneous Sylvester equation
\begin{equation}\label{eq;105}
(J+2\mu I)Y+YJ=0,
\end{equation} which has only the trivial solution if and only if the matrices $J+2\mu I$ and $J$ have disjoint spectra (cf.\ \cite{Syl}). It is easy to see (e.g., \cite[proof of Lemma 2.1]{Sigg1}) that these  are $\{2\mu,2\mu+n\}$ and $\{0,n\}$, respectively. Thus, the spectra are disjoint if and only if $\mu\neq 0,\pm n/2$. Assume that this is the case. Then only $Y=0$ solves (\ref{eq;105}) and therefore $X^T=X$. Thus, (\ref{eq;104}) may be rewritten as
\begin{equation}\label{eq;206}(2\mu I-J)X=XJ-\lambda J.\end{equation} The matrix $2\mu I-J$ is invertible if and only if $\mu\neq 0,n/2$ and, in this case (e.g., \cite[Lemma 2.1]{Sigg1}), $$(2\mu I-J)^{-1}=\frac{1}{2\mu}I+\frac{1}{2\mu(2\mu-n)}J.$$ Multiplying (\ref{eq;206}) by $(2\mu I-J)^{-1}$, we conclude that

$$X=\frac{1}{2\mu-n}(XJ-\lambda J).$$ All the columns of the matrix $XJ-\lambda J$ are equal and therefore also all the columns of $X$, which is symmetric. Thus, all the entries of $X$ are equal. Since $s(X)=s(A)$, necessarily $X=(s(A)/n^2)J$. It follows that $q(X)=s(A)^2/n^2$, a contradiction. We conclude that $|\mu|\leq n/2$ and the assertion follows from (\ref{eq;3}).
\end{proof}

\begin{corollary}
We have $$p_n\leq \frac{n^{3}(n^{2}+1)(7n^{2}+5)}{24}.$$
\end{corollary}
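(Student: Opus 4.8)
The plan is to apply the preceding Theorem and to observe that the two quantities $s(A)$ and $q(A)$ entering its bound are in fact fixed numbers, completely determined by the constraint on $A$. Indeed, the entries of any admissible $A$ are precisely $1,2,\ldots,n^{2}$ in some arrangement, so by the standard power-sum formulas
$$s(A)=\sum_{k=1}^{n^{2}}k=\frac{n^{2}(n^{2}+1)}{2},\qquad q(A)=\sum_{k=1}^{n^{2}}k^{2}=\frac{n^{2}(n^{2}+1)(2n^{2}+1)}{6},$$
independently of the particular arrangement chosen. Consequently the Theorem already furnishes a single numerical upper bound valid uniformly over all admissible $A$, so no further optimization over $A$ is needed; it remains only to substitute and simplify.

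First I would evaluate the ``variance-like'' quantity $q(A)-s(A)^{2}/n^{2}$. Factoring out $n^{2}(n^{2}+1)$ and reducing $\frac{2n^{2}+1}{6}-\frac{n^{2}+1}{4}$ to a common denominator gives $\frac{n^{2}-1}{12}$, whence
$$q(A)-\frac{s(A)^{2}}{n^{2}}=\frac{n^{2}(n^{2}+1)(n^{2}-1)}{12}.$$
Since this expression is nonnegative for every $n\geq 1$, the absolute value appearing in the Theorem may simply be dropped. Alongside this I would record the other term, $s(A)^{2}/n=n^{3}(n^{2}+1)^{2}/4$.

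Substituting both into the bound of the Theorem and factoring out $n^{3}(n^{2}+1)/24$ from the two resulting summands yields
$$\frac{n^{3}(n^{2}+1)}{24}\bigl(6(n^{2}+1)+(n^{2}-1)\bigr)=\frac{n^{3}(n^{2}+1)(7n^{2}+5)}{24},$$
which is exactly the asserted bound; taking the maximum of $s(A^{2})$ over all admissible $A$ replaces the left-hand side by $p_{n}$. I do not expect any genuine obstacle here: the content of the argument is entirely the observation that every admissible matrix shares the same $s(A)$ and $q(A)$, and the remaining work is purely the bookkeeping needed to make the common factor $n^{3}(n^{2}+1)/24$ emerge cleanly.
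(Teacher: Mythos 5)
Your proposal is correct and is exactly the argument the paper intends (the paper leaves the corollary's proof implicit): substitute $s(A)=n^{2}(n^{2}+1)/2$ and $q(A)=n^{2}(n^{2}+1)(2n^{2}+1)/6$ into the theorem's bound, note that $q(A)-s(A)^{2}/n^{2}=n^{2}(n^{2}+1)(n^{2}-1)/12\geq 0$ so the absolute value drops, and simplify. Your algebra checks out, so there is nothing to add.
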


\subsection{The lower bound}
Empirically, we find that $p_1 = 1, p_2 = 54, p_3 = 761, p_4 \geq 5284, p_5 \geq 24303, p_6 \geq 85352$ and $p_7\geq 248045$. The lower bounds were obtained with the help of a simple hill-climbing algorithm (e.g., \cite[4.1.1]{R}). 
Here are some matrices that achieve the values mentioned above.
\begin{align*}
n=4&\colon\begin{pmatrix}
1& 2& 7& 6\\
 3& 4& 12& 9\\
 8& 11&  16 & 15\\
 5 & 10& 14 & 13
\end{pmatrix}
&n=5&\colon\begin{pmatrix}
25& 23& 17& 13& 21\\
 24& 22& 11& 10& 18\\
 16& 12&  4 & 2&  8\\
 14 & 9&  3 & 1&  5\\
 20& 19&  7&  6& 15
\end{pmatrix}\\
n=6&\colon\begin{pmatrix}
11 &26&  5&  8& 28& 16\\
 25& 33& 14& 21& 35 &29\\
  6& 15&  1&  2& 18& 10\\
  7& 20&  3&  4& 22& 13\\
 27 &34& 19& 23& 36& 32\\
 17& 30&  9& 12& 31& 24
\end{pmatrix}
&n=7&\colon\begin{pmatrix}
1 &15  &2& 24  &9 & 6 &20\\
 14 &41& 18& 45 &32& 27 &43\\
  3 &19&  4& 29 &11 & 8 &25\\
 23 &44& 30& 49 &40 &36 &47\\
 10 &31& 12& 39& 22& 16& 38\\
  5 &28&  7 &35& 17 &13 &34\\
 21 &42& 26 &48& 37 &33 &46\\
 \end{pmatrix}
\end{align*}

Based on our experiments, we believe that if $A$ is such that $s(A^2)=p_n$, then it is necessary that:
\begin{enumerate}
    \item [(a)] For every $1\leq i,j\leq n$ we have $|(A)_{ij}-(A)_{ji}|\leq 1$.
    \item [(b)] If $n$ is odd, then, for every $1\leq i\leq n$, the sum of the elements of the $i$th row is equal to the sum of the elements of the $i$th column.
    \item [(c)] If $n$ is even, then, for every $1\leq i\leq n$, the sum of the elements of the $i$th row differs by exactly $1$ from the sum of the elements of the $i$th column. For half of the rows, the difference is negative, and for half of the rows, the difference is positive. The same holds for the columns.
    \item [(d)] Depending on $n$, certain numbers must lie on the main diagonal. These always include $1$ and $n^2$.
\end{enumerate}

\begin{remark}
    Of course, taking $\mu=-n/2$ in equation (\ref{eq;3}) gives a lower bound on $p_n$. A better one is based on the following construction.
\end{remark}

\begin{construction}\label{con;1}
We inductively construct matrices $A_n$ that have large $s(A_n^2)$. Set $A_1 = (1)$ and assume we have already constructed $A_{n-1}$. We define $A_n$ in two steps. First, we let
 \[A'_n =
  \begin{blockarray}{cccccc}
  \begin{block}{(cccccc)}
    n^2  & n^2-1 & n^2-3 & \cdots &  & (n-1)^2 +2 \\
  \cline{2-6}
  \begin{block*}{c|ccccc}
    n^2-2 &  &  &  &  &\\
    n^2-4   &   &  &   &  & \\
     \vdots  &   &  & A_{n-1}  &   &\\
       &  &  &  &  &\\
    (n-1)^2 +1    &  &  & &  &\\
  \end{block*}
  \end{block}
  \end{blockarray}
\]
Then, we define $A_n$ to be the matrix obtained from $A'_n$ by interchanging $(A'_n)_{1k}$ and $(A'_n)_{k1}$ if $k$ is odd, for every $2\leq k \leq n$.
\end{construction}

\begin{example}
$$A_7=\begin{pmatrix}
    49&48&45&44&41&40&37\\
 47&36&35&32&31&28&27\\
 46&34&25&24&21&20&17\\
 43&33&23&16&15&12&11\\
 42&30&22&14& 9& 8& 5\\
 39&29&19&13& 7& 4& 3\\
 38&26&18&10& 6& 2& 1
 \end{pmatrix}$$
\end{example}

\begin{theorem}
The matrix $A_n$ from Construction \ref{con;1} satisfies conditions (a) - (d) above and we have $$s(A_n^2)=\frac{n}{840}\left(240n^{6}+28n^{5}+364n^{4}+210n^{2}-28n+26-105((-1)^{n}+1)\right).$$
\end{theorem}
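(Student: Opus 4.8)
\emph{Proof proposal.} The plan is to split the statement into two parts—the structural conditions (a)--(d) and the closed formula for $s(A_n^2)$—both resting on the recursive description of $A_n$ together with the identity
\[
s(A_n^2)=\sum_{k=1}^n R_k C_k,
\]
where $R_k,C_k$ denote the $k$th row sum and $k$th column sum of $A_n$; this follows at once from $(A^2)_{ij}=\sum_k (A)_{ik}(A)_{kj}$ by summing over $i,j$ and factoring. First I would read off the entries introduced at stage $n$: the first row of $A'_n$ is $(A'_n)_{1k}=n^2-2k+3$ and the first column is $(A'_n)_{k1}=n^2-2k+2$ for $2\le k\le n$, the lower-right $(n-1)\times(n-1)$ block being $A_{n-1}$, and passing to $A_n$ only swaps $(A'_n)_{1k}$ with $(A'_n)_{k1}$ for odd $k$. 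In particular $|(A_n)_{1k}-(A_n)_{k1}|=1$ for every $k\ge 2$, so condition (a) follows by induction on $n$ using the embedded $A_{n-1}$. Condition (d) is immediate, since $(A_n)_{11}=n^2$ and the repeatedly nested corner gives $(A_n)_{nn}=(A_1)_{11}=1$.

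For (b) and (c) I would track $D_k:=R_k-C_k$. Because the lower-right block is $A_{n-1}$, for $k\ge 2$ one has $R_k=(A_n)_{k1}+r_{k-1}$ and $C_k=(A_n)_{1k}+c_{k-1}$, where $r_j,c_j$ are the row/column sums of $A_{n-1}$; hence $D_k^{(n)}=D_{k-1}^{(n-1)}-\bigl((A_n)_{1k}-(A_n)_{k1}\bigr)$, while a direct count gives $D_1^{(n)}=0$ for $n$ odd and $D_1^{(n)}=1$ for $n$ even. Feeding in $(A_n)_{1k}-(A_n)_{k1}=\pm 1$ (with sign $+1$ for $k$ even and $-1$ for $k$ odd), an induction on $n$ shows that all $D_k^{(n)}=0$ when $n$ is odd, whereas $D_k^{(n)}=+1$ for $k$ odd and $D_k^{(n)}=-1$ for $k$ even when $n$ is even; this is exactly (b) and (c).

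For the formula I would write $R_kC_k=\tfrac14\bigl(T_k^2-D_k^2\bigr)$ with $T_k:=R_k+C_k$, so that
\[
s(A_n^2)=\frac14\sum_{k=1}^n T_k^2-\frac14\sum_{k=1}^n D_k^2 .
\]
By (b)/(c) the second sum is $0$ for $n$ odd and $n$ for $n$ even, i.e.\ $n(1+(-1)^n)/2$, which accounts precisely for the $-105((-1)^n+1)$ term. The key observation is that interchanging $(A'_n)_{1k}$ with $(A'_n)_{k1}$ leaves every $T_k$ unchanged, so the $T_k$ may be computed directly from $A'_n$. This yields the clean recursion $T_k^{(n)}=T_{k-1}^{(n-1)}+(2n^2-4k+5)$ for $k\ge 2$, together with $T_1^{(n)}=2n^3-2n^2+3n-1$ (the latter by summing the $2n-1$ freshly added entries plus the doubly counted corner). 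Unfolding the recursion down to $T_1^{(n-k+1)}$ expresses each $T_k^{(n)}$ as an explicit polynomial of degree three in $n$ and $k$.

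The remaining, and most laborious, step is to substitute this polynomial into $\sum_{k=1}^n T_k^2$ and evaluate the resulting degree-seven power sum via standard Faulhaber formulas; combining with the $\sum D_k^2$ term should reproduce the stated expression. I expect this final power-sum evaluation to be the main obstacle—not conceptually, but in bookkeeping—so I would guard the algebra against the computed values $s(A_n^2)=1,54,761,5276$ for $n=1,2,3,4$.
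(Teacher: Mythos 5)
Your proposal is correct and takes essentially the same approach as the paper: the identity $s(A_n^2)=\sum_{k=1}^n R_kC_k$, explicit row/column sums derived from the recursive structure of $A'_n$, the observation that the swaps preserve $R_k+C_k$ while shifting $R_k-C_k$ by $\pm 1$ (this is exactly the content of the paper's correction equations (\ref{aa1})--(\ref{aa4})), and a closing Faulhaber computation. Your $T_k/D_k$ repackaging makes conditions (b)/(c) and the parity term $-105((-1)^n+1)$ somewhat more transparent, but it is the same argument, deferred at the same point (the degree-seven power-sum evaluation) at which the paper also leaves the algebra implicit.
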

\begin{proof}
Conditions (a) and (d) are clear.
For a matrix $B$ and $1\leq k\leq n$, denote by $R_k(B)$ (resp.\ $C_k(B)$) the sum of the $k$th row (resp.\ column) of $B$. It is immediately verified that
\begin{equation}\label{eq;123}
s(B^2)=\sum_{k=1}^nR_k(B)C_k(B).
\end{equation}
It is not hard to see that, for every $1\leq k\leq n$, we have \begin{align*}
R_{n-k+1}(A'_n) &=\sum_{j=1}^{n-k}\left(k^{2}+\sum_{\ell=1}^{j}(2k-1+2(\ell-1))\right)+k^{2}+\sum_{j=1}^{k-1}\left(k^{2}-1-2(j-1)\right),\\
C_{n-k+1}(A'_n) &=\sum_{j=1}^{n-k}\left(1+k^{2}+\sum_{\ell=1}^{j}(2k-1+2(\ell-1))\right)+k^{2}+\sum_{j=1}^{k-1}\left(k^{2}-2-2(j-1)\right).
\end{align*} Furthermore, if $n$ is odd then
\begin{align}
R_{n-k+1}(A_n) &= R_{n-k+1}(A'_n)+\frac{n+1-2k}{2},\label{aa1}\\
C_{n-k+1}(A_n) &= C_{n-k+1}(A'_n)-\frac{n+1-2k}{2} \label{aa2}
\end{align} and, if $n$ is even, then
\begin{align}
R_{n-k+1}(A_n) &= R_{n-k+1}(A'_n)+\frac{n+1-2k+(-1)^k}{2},\label{aa3}\\
C_{n-k+1}(A_n) &= C_{n-k+1}(A'_n)-\frac{n+1-2k+(-1)^k}{2}.\label{aa4}
\end{align}
From this, it is easy to see that conditions (b) and (c) are also satisfied. Finally, multiplying (\ref{aa1}) and (\ref{aa2}) (or (\ref{aa3}) and (\ref{aa4})), summing over $1\leq k\leq n$ and using (\ref{eq;123}) and Faulhaber's formula (e.g.\ \cite{B}), we conclude that
$$s(A_n^2)=
\begin{cases}
\frac{n}{420}\left(120n^6 + 14n^5 + 182n^4 + 105n^2 - 14n + 13\right) &\textnormal{if } n \textnormal{ is odd}\\
\frac{n}{420}\left(120n^6 + 14n^5 + 182n^4 + 105n^2 - 14n - 92\right) &\textnormal{if } n \textnormal{ is even},
\end{cases}$$ from which the last assertion immediately follows.
\end{proof}

\begin{corollary}
We have $$p_n\geq \frac{n}{840}\left(240n^{6}+28n^{5}+364n^{4}+210n^{2}-28n+26-105((-1)^{n}+1)\right).$$
\end{corollary}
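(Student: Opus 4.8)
The plan is to reduce the whole computation to the identity $s(B^2)=\sum_{k=1}^{n}R_k(B)C_k(B)$, which is immediate once we write $(B^2)_{ij}=\sum_{k}B_{ik}B_{kj}$, sum over $i$ and $j$, and observe that $\sum_{i}B_{ik}=C_k(B)$ and $\sum_{j}B_{kj}=R_k(B)$. Conditions (a) and (d) I would settle first and cheaply, by induction on the nested block structure: inside the copy of $A_{n-1}$ they hold by the inductive hypothesis, while on the freshly added border every mirror pair $\{(A'_n)_{1k},(A'_n)_{k1}\}$ consists of two consecutive integers, so interchanging them when $k$ is odd keeps $|(A_n)_{1k}-(A_n)_{k1}|=1$; moreover $n^2$ never leaves position $(1,1)$ and $1$ sits at $(n,n)$ at the bottom of the recursion, so both are diagonal.

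The core of the argument is an exact evaluation of the row and column sums. I would use the recursive structure of $A_n$ as a family of nested borders: the border introduced at stage $\ell$ (carrying the integers $(\ell-1)^2+1,\dots,\ell^2$) lives in row and column $n-\ell+1$, and its horizontal arm exceeds its vertical arm entrywise by exactly $1$. Reading row $n-k+1$ across these layers, its entries split into the vertical contributions of the larger borders $\ell=k+1,\dots,n$ (the term $\sum_{j=1}^{n-k}(k^2+j(2k+j-2))$, in which $j$ records the border $k+j$) together with the top row of the block $A_k$ itself (a sum that collapses to $k^3-k^2+2k-1$); the column sum $C_{n-k+1}$ arises identically, but each transverse contribution is now one unit larger. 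Subtracting, all the bulky terms cancel and one is left with the clean discrepancy $R_{n-k+1}(A'_n)-C_{n-k+1}(A'_n)=-(n+1-2k)$.

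I would then feed in the transpositions, whose net effect converts the nested, border-by-border sums into those of $A_n$. Each interchange at an odd position transfers a single unit from a row to the matching column (and the reverse for the symmetric entry), so accumulating these over all borders yields the corrections (aa1)--(aa4); combined with the discrepancy above they give $R_{n-k+1}(A_n)-C_{n-k+1}(A_n)=0$ for odd $n$ and $(-1)^k$ for even $n$, which are exactly conditions (b) and (c), the alternating sign guaranteeing that the $\pm 1$'s are evenly split. Letting $\sigma_k$ denote the shared value of the $k$th row and column sums when $n$ is odd, or their average when $n$ is even, the summands of the identity collapse pleasantly: $R_{n-k+1}(A_n)C_{n-k+1}(A_n)=\sigma_k^2$ for odd $n$ and $\sigma_k^2-\tfrac14$ for even $n$. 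Hence $s(A_n^2)=\sum_{k=1}^{n}\sigma_k^2$, with a further $-n/4$ subtracted in the even case.

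What remains is to evaluate $\sum_{k=1}^{n}\sigma_k^2$. Since $\sigma_k$ is an explicit cubic in $k$ with coefficients in $n$, its square is a degree-six polynomial in $k$, and Faulhaber's formula converts the sum into a degree-seven polynomial in $n$; the extra $-n/4$ in the even case is precisely the gap between the two displayed closed forms, and merging them produces the single expression containing $105((-1)^n+1)$. I expect the genuine difficulty to lie entirely here: pinning down the parity-sensitive $(-1)^k$ correction for even $n$, and then pushing the degree-seven Faulhaber summation through without error so that the constants $+13$ (odd) and $-92$ (even) come out exactly right.
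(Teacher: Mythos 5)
Your proposal follows the paper's proof essentially step for step: the same identity $s(B^2)=\sum_{k=1}^nR_k(B)C_k(B)$, the same border-by-border evaluation of the row and column sums (your $\sum_{j=1}^{n-k}\bigl(k^2+j(2k+j-2)\bigr)$ and $k^3-k^2+2k-1$ are exactly the paper's two sums for $R_{n-k+1}(A'_n)$), the same cumulative transposition corrections (\ref{aa1})--(\ref{aa4}), and the same Faulhaber finish. The only difference is cosmetic: you package each product as $\sigma_k^2$ (odd $n$) or $\sigma_k^2-\tfrac14$ (even $n$) via the row/column average, which neatly explains the $n/4$ gap between the two parity cases, whereas the paper multiplies the corrected row and column sums directly.
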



\end{document}